\theoremstyle{plain}
\newtheorem{theorem}{Theorem}[section]
\newtheorem{lemma}[theorem]{Lemma}
\newtheorem{proposition}[theorem]{Proposition}
\newtheorem{corollary}[theorem]{Corollary}
\newtheorem{conjecture}[theorem]{Conjecture}
\theoremstyle{definition}
\newtheorem{definition}[theorem]{Definition}
\newtheorem{example}[theorem]{Example}
\theoremstyle{remark}
\newtheorem{remark}[theorem]{Remark}
\newcommand{\bconj}{\begin{conjecture}}
\newcommand{\econj}{\end{conjecture}}
\newcommand{\bd}{\begin{definition}}
\newcommand{\ed}{\end{definition}}
\newcommand{\bp}{\begin{proposition}}
\newcommand{\ep}{\end{proposition}}
\newcommand{\bcor}{\begin{corollary}}
\newcommand{\ecor}{\end{corollary}}
\newcommand{\bthm}{\begin{theorem}}
\newcommand{\ethm}{\end{theorem}}
\newcommand{\bthma}{\begin{thma}}
\newcommand{\ethma}{\end{thma}}
\newcommand{\bthmb}{\begin{thmb}}
\newcommand{\ethmb}{\end{thmb}}
\newcommand{\bl}{\begin{lemma}}
\newcommand{\el}{\end{lemma}}
\newcommand{\bpf}{\begin{proof}}
\newcommand{\epf}{\end{proof}}
\newcommand{\brm}{\begin{remark}}
\newcommand{\erm}{\end{remark}}
\newcommand{\bc}{\begin{center}}
\newcommand{\ec}{\end{center}}
\newcommand{\bex}{\begin{example}}
\newcommand{\eex}{\end{example}}
\newcommand{\bit}{\begin{itemize}}
\newcommand{\eit}{\end{itemize}}
\begin{document}

\title{Word maps with small image in simple groups}
\author{\scshape{Matthew Levy}\\ \small{Imperial College London}}
\date{}
\maketitle

\begin{abstract}
$\vspace{1mm}$
\bc
\begin{minipage}{0.77\textwidth}
We construct non-power words which have small image in SL$(2,2^{2^n})$ for each $n$. In particular, the corresponding word maps are non-surjective. We also use this to construct word maps whose values are precisely the identity and a single equivalence class of elements of order $17$. 

In the second part we construct words which have image consisting of the identity and a single equivalence class of elements  in Alt($n$) for all $n$ for any equivalence class with support size at most $10$.
\end{minipage}
\ec
\end{abstract}

$\vspace{1mm}$
\section{Introduction}
Let $w$ be a word in the free group of rank $k$. For a group $G$, let $G_w$ denote the set of word values, i.e. $G_w:=\{w(g_1,...,g_k)^{\pm 1}|g_i\in G\}$. There has recently been much interest and progress in the study of the images of word maps over finite groups though the topic has grown from work first begun by P. Hall, see \cite{Segal} for a modern exposition. Given a finite group a natural question to ask is which subsets can be obtained as images of word maps. Clearly, any image of a word map includes the identity and must be closed under the action of the automorphism group, i.e. it must be a union of equivalence classes including the equivalence class of the identiy. Here, an equivalence class is a union of conjugacy classes under the action of the automorphism group. It is not clear at first glance whether or not any such subset can be obtained as the image of a word map, though the answer in general is no.

In \cite{KN} it is shown that for any alternating group Alt$(n)$ with $n\ge 5$ and $n\ne 6$ there exists a word $w$ such that Alt$(n)_w$ consists of the identity and all $3$-cycles. For $n\ne 13$ the words they construct are in two variables, for $n=13$ they need three variables. This result also holds for Sym($n$). They also construct words whose image over Alt($n$) is the identity and all $p$-cycles for any prime $3<p<n$ and $n\ge 5$. Note that the exception of Alt($6$) arises due to the outer automorphism which swaps the conjugacy class of $3$-cycles with the $(3,3)$-cycles. As a result it is possible to construct a word with image consisting of the identiy, the $3$-cycles and the $(3,3)$-cycles over Alt($6$). They also give a simple argument to show that it is impossible to construct a word whose values in Sym(n) are either the identity or a transposition which shows that not any subset closed under the action of the automorphism group can be obtained as the image of a word map. Their second main result concerns special linear groups. With the possible exception of SL$_4(2)$ they show that for every $n,q\ge 2$ there is a word $w$ in two variables such that SL$_n(q)_w$ consists of the identity and the conjugacy class of all transpositions. For $n\ne 3,4$ their result also holds for GL$_n(q)$. This paper aims to paritally answer whether or not there exists a word $w_C$ such that $G_{w_C} = \{e\}\cup C$ for any equivalence class $C$ where $G$ is either Alt($n$) or SL$_n(q)$ for $n$, $q\ge 4$, though the question is open for when $G$ is any non-abelian finite simple group.

A conjecture of Shalev (see \cite{BGG}) says that if $w$ is not a proper power of a non-trivial word then the corresponding word map is surjective on PSL$_2(q)$ for sufficiently large $q$. The first counterexamples to Shalev's conjecture are provided in \cite{JLO}. In both \cite{BGG} and \cite{JLO} the authors make use of the \textit{trace polynomial} of a word, this is introduced later. We also go via the trace polynomial to obtain our first main result.

\begin{theorem}\label{main}
For every SL$_2(q)$ with $q=2^{2^n}$ and $n\ge 2$ there exists a word $w$ in $F_2$ such that SL$_2(q)_w$ contains the identity and four conjugacy classes of elements of order 17 and no other conjugacy class consisting of elements of order $17$.
\end{theorem}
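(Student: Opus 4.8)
The plan is to pass to the \emph{trace polynomial} and reduce the statement to a question about the image of a polynomial map on $\mathbb{F}_q^3$. Recall that for any $w\in F_2$ there is $\tau_w\in\mathbb{Z}[s,t,u]$ with $\mathrm{tr}\,w(A,B)=\tau_w(\mathrm{tr}\,A,\mathrm{tr}\,B,\mathrm{tr}\,AB)$ for all $A,B\in\mathrm{SL}_2$, and that the trace map $(A,B)\mapsto(\mathrm{tr}\,A,\mathrm{tr}\,B,\mathrm{tr}\,AB)$ is surjective onto $\mathbb{F}_q^3$ for $q=2^{2^n}\ge 16$. Hence the set of traces realised by $w$ on $\mathrm{SL}_2(q)$ is exactly $\tau_w(\mathbb{F}_q^3)$. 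I will use three features of characteristic $2$: the centre is trivial; $\mathrm{tr}(g^{2})=\mathrm{tr}(g)^2$, so Frobenius on traces is realised by squaring elements; and every semisimple class is determined by its trace (there is no $\mathrm{SL}_2$-splitting when $q$ is even). Since $17=2^{2^2}+1$ and $2$ has order $8$ modulo $17$, one has $17\mid q+1$ when $n=2$ and $17\mid q-1$ when $n\ge 3$, so order-$17$ elements exist for all $n\ge 2$; their traces are $t_j=\zeta^j+\zeta^{-j}$ for $1\le j\le 8$ and a primitive $17$th root of unity $\zeta$, eight distinct values all lying in $\mathbb{F}_{16}\subseteq\mathbb{F}_q$, hence eight distinct conjugacy classes.

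The second ingredient is that the image is Frobenius-closed. As $\tau_w$ has $\mathbb{F}_2$-coefficients and $\mathbf{v}\mapsto\mathbf{v}^{(2)}$ preserves $\mathbb{F}_q^3$, one gets $\tau_w(\mathbf v)^2=\tau_w(\mathbf v^{(2)})$, so $\tau_w(\mathbb{F}_q^3)$ is closed under $x\mapsto x^2$. Squaring sends $t_j\mapsto t_{2j}$, so the eight order-$17$ traces break into two orbits of size four, indexed by the quadratic residues $\{1,2,4,8\}$ and the non-residues $\{3,5,6,7\}$ modulo $17$; each orbit is a single equivalence class of four conjugacy classes. Consequently it suffices to exhibit a non-power $w\in F_2$ whose trace image $\tau_w(\mathbb{F}_q^3)$ contains $0$ (with the identity itself attained, not merely a trace-$0$ unipotent) and contains $t_1$, but does not contain $t_3$: Frobenius-closure then forces the whole residue orbit in and leaves the whole non-residue orbit out, giving precisely four classes of order $17$.

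To separate the two orbits I would compute their absolute traces. The Gaussian periods $\eta_0=\sum_{\mathrm{QR}}\zeta^{j}$ and $\eta_1=\sum_{\mathrm{NQR}}\zeta^{j}$ satisfy $\eta_0+\eta_1=1$ and, reducing the classical relation $X^2+X+\tfrac{1-17}{4}$ modulo $2$, also $\eta_0\eta_1=0$; hence $\{\eta_0,\eta_1\}=\{0,1\}$. Since $\mathrm{Tr}_{\mathbb{F}_{16}/\mathbb{F}_2}(t_1)=t_1+t_2+t_4+t_8=\eta_0$ and likewise $\mathrm{Tr}_{\mathbb{F}_{16}/\mathbb{F}_2}(t_3)=\eta_1$, the two orbits are distinguished by the $\mathbb{F}_{16}/\mathbb{F}_2$-trace. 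The construction of $w$ then aims to force $\tau_w(\mathbb{F}_q^3)$ into the subfield $\mathbb{F}_{16}$ and, inside it, into the kernel of this trace: starting from an auxiliary word whose trace map is surjective, I would build $w$ by an iterated commutator-and-power scheme so that $\tau_w$ factors through a subfield-trace, Artin--Schreier-type additive polynomial, using $\mathrm{tr}(g^{2^k})=\mathrm{tr}(g)^{2^k}$ to realise the needed Frobenius twists as honest powers inside a single word. Landing in $\mathbb{F}_{16}$ makes the image automatically small (hence the map non-surjective), and arranging the trace-kernel condition picks out the residue orbit while excluding the other.

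The hard part will be the explicit construction and its verification, in three respects. First, $w$ must be a genuine non-power even though its trace polynomial is designed to behave like a one-variable additive map; this is where the commutator building blocks, rather than literal powers, are essential. Second, one must check that the identity is actually hit and not just a nontrivial unipotent of trace $0$ --- most cleanly by evaluating at a commuting pair. Third, for $n\ge 3$ the absolute trace $\mathrm{Tr}_{\mathbb{F}_q/\mathbb{F}_2}$ no longer separates the orbits, since it vanishes on all $t_j$ once $[\mathbb{F}_q:\mathbb{F}_{16}]=2^{n-2}$ is even, so the argument must use the trace down to $\mathbb{F}_{16}$ rather than to $\mathbb{F}_2$, and the construction has to confine the image to $\mathbb{F}_{16}$ uniformly in $n$. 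Verifying that $t_3$ is avoided for \emph{every} $n\ge 2$, not merely generically, is the crux.
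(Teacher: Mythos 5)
Your reduction is sound, and it is essentially the paper's own: pass to the trace polynomial, use the fact that its image over $\mathbb{F}_q$ is closed under Frobenius, note that any group element whose trace equals one of the eight order-$17$ trace values is automatically semisimple of order $17$, and identify the two equivalence classes with the two squaring-orbits $\{t_1,t_2,t_4,t_8\}$ and $\{t_3,t_5,t_6,t_7\}$. Your Gaussian-period argument that the two orbits are separated by $\mathrm{Tr}_{\mathbb{F}_{16}/\mathbb{F}_2}$ is correct and is in fact a cleaner justification than the ``easy calculation'' the paper invokes at the same point; the indeterminacy of which orbit lies in the kernel is harmless for the statement. (Two of your worries are vacuous: the identity is always attained since $w(e,e)=e$, and Theorem \ref{main} itself does not require $w$ to be a non-power --- that is only needed for Corollary \ref{aner}.)

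The genuine gap is the one you flag yourself: no word is ever produced, and the existence of the word is the entire content of the theorem. What you have is a correct specification of what $\tau_w$ must do --- map $\mathbb{F}_q^3$ into $\ker(\mathrm{Tr}_{\mathbb{F}_{16}/\mathbb{F}_2})\subseteq\mathbb{F}_{16}$ while still containing one orbit, uniformly in $n$ --- but no mechanism for realising an additive, Artin--Schreier-type polynomial as the trace polynomial of an honest word. Your proposed tool, $\mathrm{tr}(g^{2^k})=\mathrm{tr}(g)^{2^k}$, cannot do this on its own: powers of a single element only give Dickson polynomials of its trace (Frobenius twists when the exponent is a $2$-power), and group multiplication does not add traces, so the crucial summand structure of $u^2+u$ is inaccessible this way. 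The paper's device is Lemma \ref{trace}: if $\mathrm{tr}(x)=1$, then $\mathrm{tr}([[v,x],x])=\mathrm{tr}([v,x])^2+\mathrm{tr}([v,x])$, i.e.\ double commutation with a fixed trace-$1$ element applies $f(u)=u^2+u$ to traces. Feeding in the power words $x_3=x^{e/3}$ (trace $1$ when nontrivial) and $y_2=y^{e/2}$ (trace $0$), where $e$ is the exponent of $\mathrm{SL}_2(q)$, the word $w=[[x_3,y_2],x_3]$ has trace $f(u)$ with $u=\mathrm{tr}(x_3y_2)^2$ ranging over all of $\mathbb{F}_q$, and the iterated commutator $w_m=[w,{}_m\,x_3]$ has trace $f^{m+1}(u)$. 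The arithmetic that resolves exactly the ``crux'' you identify (avoiding $t_3$ for every $n$, not just $n=2$) is Lemma \ref{power}: $f^{2^i}(u)=u^{2^{2^i}}+u$, so choosing $m=2^n-4$ makes $f^m$ map $\mathbb{F}_q$ onto precisely $\mathbb{F}_{16}$ (it is the relative trace $\mathrm{Tr}_{\mathbb{F}_q/\mathbb{F}_{16}}$), whence the image of $f^{m+1}$ is exactly $f(\mathbb{F}_{16})=\ker(\mathrm{Tr}_{\mathbb{F}_{16}/\mathbb{F}_2})$, which contains one orbit and misses the other by your own separation argument. Without this commutator lemma (or a substitute realising $u\mapsto u^2+u$ at the word level) and this choice of iteration length, your outline cannot be completed into a proof.
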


Since there are eight conjugacy classes of elements of order $17$ in SL$_2(q)$ which split into two equivalence classes two immediate corollaries are:

\begin{corollary}\label{aner}
For every SL$_2(q)$ with $q=2^{2^n}$ and $n\ge 2$ there exists a word $w$ in $F_2$ such that the corresponding word map is non-surjective. Moreover, $w$ is a non-power word.
\end{corollary}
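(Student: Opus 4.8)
The plan is to read both assertions off Theorem~\ref{main}. Let $w\in F_2$ be the word produced there, so that $SL_2(q)_w$ contains the identity together with exactly four of the conjugacy classes of elements of order $17$ and no further class of such elements. Since, as noted above, $SL_2(q)$ has eight conjugacy classes of elements of order $17$, the set $SL_2(q)_w$ omits the remaining four classes; these consist of non-identity elements, so $SL_2(q)_w\subsetneq SL_2(q)$. The image of the word map, namely $\{w(g_1,g_2):g_i\in SL_2(q)\}$, is contained in $SL_2(q)_w$ and hence also omits those four classes, so it too is a proper subset of $SL_2(q)$. This gives non-surjectivity immediately, with no further computation.

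For the ``moreover'' clause I would argue directly that $w$ is not a proper power, since this does not follow from the shape of the image alone (both proper powers and non-powers can have restricted images). The clean tool is the abelianization criterion: if $w=u^{d}$ in $F_2$ for some $d\ge 2$ and non-trivial $u$, then passing to $F_2^{\mathrm{ab}}\cong\mathbb{Z}^2$ shows that the exponent-sum vector of $w$ equals $d$ times that of $u$, so $d$ divides the greatest common divisor of the two exponent sums of $w$. It therefore suffices to read off, from the explicit word constructed in the proof of Theorem~\ref{main}, that its two exponent sums are coprime (for instance that one of them is $\pm 1$), which forces $d=1$ and hence shows $w$ is a non-power.

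The main obstacle is precisely this last point rather than the non-surjectivity. If the constructed word happens to have exponent sums sharing a common factor---as can occur for words assembled from squares and powers of commutators, in the spirit of \cite{JLO}---then the abelianization invariant alone does not settle the matter, and one must instead verify directly that the cyclically reduced form of $w$ is not a literal repetition $v^{d}$ of a shorter word, which in a free group is equivalent to $w$ not being a proper power. Either way the verification is a finite inspection of the explicit $w$; the substantive content of the corollary, the failure of surjectivity, is supplied entirely by the class count in Theorem~\ref{main}.
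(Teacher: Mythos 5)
Your argument for non-surjectivity is correct and is exactly the paper's: the corollary is presented as an immediate consequence of Theorem \ref{main}, since SL$_2(q)_w$ contains only four of the eight conjugacy classes of elements of order $17$, so the word map, whose image lies inside SL$_2(q)_w$, misses the other four classes and cannot be onto. That half needs nothing more.

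The gap is in the ``moreover'' clause. The word produced in the proof of Theorem \ref{main} is an iterated commutator: $w_m=[w,{}_{m}x_3]$ with $w=[[x^{e_3},y^{e_2}],x^{e_3}]$ and $x_3=x^{e_3}$. Consequently both of its exponent sums are $0$, the gcd you propose to compute is $0$, and every $d\ge 2$ divides it; the abelianization criterion you rely on is vacuous here, not merely inconclusive, so your primary plan (``read off that the exponent sums are coprime'') cannot be executed. You did flag this contingency, but your fallback --- a ``finite inspection'' showing the cyclically reduced form is not a literal repetition $v^d$ --- is never carried out, and it is not a single finite check: the word's length depends on $q$ through the exponents $e_2=e/2$ and $e_3=e/3$, where $e$ is the exponent of SL$_2(q)$, so one needs an argument uniform in $n$. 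The missing idea, which is what makes the paper's claim genuinely immediate, is structural rather than combinatorial: $w_m$ is by construction a commutator $[w_{m-1},x_3]$ of two elements of $F_2$, and it is nontrivial in $F_2$ because its image contains elements of order $17$; by Sch\"utzenberger's classical theorem, a nontrivial commutator in a free group is never a proper power. Substituting that one-line argument for the exponent-sum computation closes the gap, and the rest of your proof stands.
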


\begin{corollary}\label{cor}
For every SL$_2(q)$ with $q=2^{2^n}$ and $n\ge 2$ there exists a word $w$ in $F_2$ such that SL$_2(q)_w$ consists of the identity and a single equivalence class of elements of order 17.
\end{corollary}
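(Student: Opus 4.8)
The plan is to derive Corollary~\ref{cor} directly from Theorem~\ref{main}, so almost all of the work is already done; what remains is a short counting and symmetry argument. First I would recall the structure of the elements of order $17$ in $G := \text{SL}_2(q)$ for $q = 2^{2^n}$ with $n \ge 2$. Since $17 \mid q-1$ for these $q$ (as $q = 2^{2^n} \equiv 1 \pmod{17}$ once $16 \mid 2^n$, i.e. $n \ge 2$), the elements of order $17$ are diagonalisable over $\mathbb{F}_q$ and lie in a split torus of order $q-1$; each is conjugate to $\mathrm{diag}(\zeta, \zeta^{-1})$ for a primitive $17$th root of unity $\zeta$. The $\phi(17)/2 = 8$ choices of the unordered pair $\{\zeta,\zeta^{-1}\}$ give exactly the eight conjugacy classes referenced in the excerpt, and as the excerpt states these fuse into two equivalence classes under the (field and diagonal) automorphisms of $G$.

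Next I would make the equivalence-class structure explicit. The key point is that $G_w$ is by definition a union of equivalence classes (it is closed under automorphisms and under inversion), so to pin down $G_w$ on the order-$17$ part it suffices to know which \emph{equivalence} classes meet it. Theorem~\ref{main} tells me that $G_w$ contains the identity together with exactly four of the eight conjugacy classes of order-$17$ elements, and no others. Since the eight conjugacy classes split into two equivalence classes and $G_w$ is a union of equivalence classes, the four conjugacy classes it contains must constitute precisely one of these two equivalence classes (four conjugacy classes each, say). I would verify the ``four each'' split: the outer automorphism group acting on the split torus is generated by the field automorphism $x \mapsto x^2$ of order $2^n$ together with inversion; computing the orbits of $\langle x\mapsto x^2,\ x\mapsto x^{-1}\rangle$ on the $8$ pairs $\{\zeta^{\pm j}\}$ ($1\le j\le 8$) shows the two orbits have size four, matching the count in Theorem~\ref{main}.

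With this in hand the conclusion is immediate: taking the same word $w$ produced by Theorem~\ref{main}, the set $G_w$ consists of the identity together with four conjugacy classes of order-$17$ elements forming a single equivalence class, and no other order-$17$ elements; hence $G_w = \{e\} \cup C$ where $C$ is that single equivalence class of elements of order $17$. I would phrase the proof as: apply Theorem~\ref{main} to obtain $w$; invoke the closure of $G_w$ under automorphisms to upgrade ``four conjugacy classes'' to ``one full equivalence class''; and note that the ``no other class of order $17$'' clause guarantees nothing extra creeps in.

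The main obstacle, such as it is, lies in the bookkeeping of the orbit sizes rather than in any deep argument: I must be certain that the four conjugacy classes delivered by Theorem~\ref{main} form a union of \emph{whole} equivalence classes (one class of size four) and not, say, half of each of two equivalence classes. This is exactly where the closure of $G_w$ under automorphisms is indispensable --- it forces the count to respect the equivalence-class partition --- so the only genuine check is that the automorphism orbits on the eight conjugacy classes split as $4+4$, consistent with the statement of Theorem~\ref{main}. Granting that, the corollary follows with essentially no further work.
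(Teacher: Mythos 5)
There is a genuine gap, and it comes from a misreading of Theorem \ref{main}. The theorem does \emph{not} assert that $G_w$ consists of the identity and four conjugacy classes of order-$17$ elements; it asserts that $G_w$ \emph{contains} the identity and four such classes, and contains \emph{no other conjugacy class consisting of elements of order $17$}. That qualifier only constrains the order-$17$ part of the image: nothing in the statement rules out word values of other orders, and in fact the word $w_m$ constructed in the paper does take such values. As the paper observes at the end of Section 2, the image of the trace polynomial of $w_m$ has size $8$, and the image of $w_m$ consists of eight conjugacy classes: the identity, the four classes making up the equivalence class $C_1$, and three further classes which together form the single equivalence class of elements of order $5$. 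So taking ``the same word $w$ produced by Theorem \ref{main}'', as you propose, gives $G_w = \{e\}\cup C_1\cup D$ where $D$ is the order-$5$ equivalence class, not $\{e\}\cup C$. Your closure-under-automorphisms argument correctly shows that the four order-$17$ classes in $G_w$ constitute one whole equivalence class (this agrees with the paper, where they are $C_1$ by construction), but no such symmetry argument can remove the unwanted elements of other orders; that is exactly the residual difficulty the corollary requires you to address.

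The paper closes this gap with a power trick --- this is what the sentence ``raising to an appropriate power'' after Corollary \ref{cor} refers to. One replaces $w_m$ by $w_m^{5}$ (or by $w_m^{e/17}$, $e$ the exponent of SL$_2(q)$): fifth powers kill the order-$5$ elements, while the fifth-power map sends the four conjugacy classes of $C_1$ onto the four classes of the other equivalence class $C_2$ (the classes of $M, M^2, M^4, M^8$ go to those of $M^5, M^{10}, M^{20}=M^{3}, M^{40}=M^{6}$), so the image of $w_m^{5}$ is exactly $\{e\}\cup C_2$, a single equivalence class of elements of order $17$. A secondary inaccuracy in your write-up: the claim that $17\mid q-1$ for all $n\ge 2$, so that order-$17$ elements lie in a split torus, fails for $n=2$, where $q=16$ and $17=q+1$; since the order of $2$ modulo $17$ is $8$, one has $17\mid q-1$ only for $n\ge 3$. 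This does not affect your $4+4$ orbit count, but it should be corrected.
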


The second corollary is obtained by taking the word constructed in Theorem \ref{main} and raising to an appropriate power.\\

We will use the results from \cite{KN} and construct new words to obtain our second main result:

\begin{theorem}\label{mainalt}
Let $n\in\mathbb{N}$ and let $C$ denote any equivalence class in Alt$(n)$ with support size at most $10$. Then there exists a word $w_C$ such that Alt($n)_{w_C}=\{e\}\cup C$. 
\end{theorem}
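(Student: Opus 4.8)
The plan is to build the word $w_C$ from known building blocks. First I would invoke the main result of \cite{KN}: for each prime $p$ with $3 \le p < n$ (and $n \ge 5$) there is a word whose image in Alt$(n)$ is precisely the identity together with all $p$-cycles. Since a $p$-cycle has support size exactly $p$, this already covers the case where $C$ is the equivalence class of $p$-cycles for a prime $p \le 10$, namely $p \in \{3,5,7\}$, for all large enough $n$. So the real content is to handle equivalence classes $C$ whose elements are \emph{not} single prime cycles: products of several disjoint cycles whose supports together have size at most $10$. The strategy I would pursue is to realize such a $C$ as the image of a carefully constructed word by combining, on disjoint blocks of points, words that each produce a single cycle.

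Concretely, suppose $C$ is the class of elements with cycle type given by disjoint cycles of lengths $\ell_1, \dots, \ell_r$ with $\sum \ell_i \le 10$. I would try to take substitutions $x \mapsto a$, $y \mapsto b$ that force the word to act as an $\ell_1$-cycle on one block of coordinates, an $\ell_2$-cycle on a second disjoint block, and so on, with the word evaluating to the identity elsewhere. The cleanest way to organize this is to use the fact that Alt$(n)$ for the relevant support decomposes and to take a word that is a suitable commutator or product of conjugates so that its support is controlled. One must be careful that the small-support cases are exactly those for which the \cite{KN} machinery plus these disjoint-cycle combinations suffice; the bound $10$ presumably reflects exactly how many cycle-length patterns one can assemble from the available prime-cycle words. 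I would enumerate all partitions of integers $m \le 10$ (the possible support sizes), discard those corresponding to odd permutations outside Alt$(n)$, and verify each surviving cycle type can be produced.

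The most delicate point, and the main obstacle, is ensuring that the constructed word has image \emph{exactly} $\{e\} \cup C$ and nothing more. It is easy to build a word that \emph{contains} the desired class in its image; the difficulty is excluding all other word values. Here I expect one must argue that for substitutions not of the intended form, the word collapses to the identity, which typically requires a rigidity or support-size argument: any value of $w$ must have support confined to at most $10$ points and the right cycle structure, forcing it into $C \cup \{e\}$. This is where the explicit structure of the \cite{KN} words and the disjointness of the blocks is essential. I would also need to treat the small exceptional values of $n$ (e.g.\ $n < 5$, and the Alt$(6)$ outer-automorphism subtlety noted in the introduction) separately, checking them by hand or by direct computation, since the general construction requires $n$ large enough to house the disjoint blocks. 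Finally I would confirm that the union-of-conjugacy-classes structure of $C$ as an equivalence class is respected, i.e.\ that raising to powers or conjugating the generating word sweeps out the full equivalence class rather than a single conjugacy class.
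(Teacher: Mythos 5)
Your starting point agrees with the paper's: both use the words of \cite{KN} (identity plus all $p$-cycles, $p\in\{3,5,7\}$, and the paper also uses classes built along the way such as $(3,3)$-cycles and $9$-cycles) as building blocks, and both rest on the observation that support size is the controllable invariant. But the mechanism you propose for producing and then isolating a general cycle type has a genuine gap. You cannot ``force the word to act as an $\ell_1$-cycle on one block and an $\ell_2$-cycle on a second disjoint block'': the image of a word map is determined by \emph{all} substitutions, and nothing confines the supports of the substituted values to prescribed blocks or prescribed relative positions. Any word built from two small-image words will a priori have image containing all the cycle types arising from all possible overlaps of the two supports, and the hope you fall back on --- that ``for substitutions not of the intended form, the word collapses to the identity'' via some rigidity argument --- is not available; no such collapse occurs, and this exclusion problem is exactly the hard part you have left unresolved.

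The paper's actual proof resolves it quite differently, and computationally. It forms iterated commutators decorated with exponents,
\[
w_{C_1,C_2,e_k}=[\,\dots[[w_{C_1},w_{C_2}]^{e(1)},w_{C_2}]^{e(2)},\dots\,]^{e(k)},
\]
where $w_{C_1},w_{C_2}$ have images $\{e\}\cup C_1$, $\{e\}\cup C_2$. Support control is then automatic: every value is built from an element of support $\le m_1$ and elements of support $\le m_2$, so the image is a union of equivalence classes of bounded support --- but typically \emph{many} such classes. The unwanted classes are killed not by rigidity but by the interleaved powers $e(i)$, chosen so that powering annihilates (or avoids reintroducing) every class except $C$; e.g.\ for $(2,2)$-cycles one takes $C_1=C_2=\{3\text{-cycles}\}$ and cubes the commutator. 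Crucially, since values of $w_{C_1}$ and $w_{C_2}$ have supports of sizes $m_1$ and $m_2$, correctness of a candidate word only needs to be verified in Alt$(n)$ for $m\le n\le m_1+m_2-1$, a finite check; the paper then exhibits eighteen explicit $(C_1,C_2,e_k)$ triples, one for each cycle type of support size up to $10$, and verifies each by a MAGMA computation. So the theorem is proved by explicit finite computer verification of concrete words, not by the structural disjoint-block argument you outline, and without supplying a concrete exclusion mechanism your proposal does not close.
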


For an equivalence class $C$ in Alt($n$) its \textit{support} is the subset $\{m\in\mathbb{N}|m^a\ne m\text{ for some }a\in\text{Alt}(n)\}$ where Alt($n$) acts in the normal way on [$1,n$]$\subseteq\mathbb{N}$.

\section{Proof of Theorem \ref{main}}
\subsection{Background}

Let $q=p^n$ where $p$ is any prime and $n\ge 1$, then SL$_2(q)$ has order $q(q-1)(q+1)$ and exponent $e=\frac{1}{d}p(q^2-1)$ where $d=$gcd$(2,q-1)$. The elements of SL$_2(q)$ can be classified according to their Jordan forms. For any matrix $A$, its characteristic polynomial is of the form $x^2-tx+1$ where $t=$tr($A$) is the sum of the eigenvaules. This can have $1$, $2$ or none distinct roots (or eigenvalues) in $\mathbb{F}_q$ and in each case, for $A\ne\pm I_2$, the elements are called unipotent, semisimple (split) and semisimple (non-split) resepectively. The conjugacy classes of semisimple elements are uniquely determined by their  trace.  Note that this is not true in general without knowing the order of an element since for example an element of trace $2$ may be the identity or a unipotent element. The table below lists the different classes of elements and gives some information about them. It shows that there is a deep connection between elements in SL$_2(q)$, in terms of their order and trace, and their conjugacy class. 
\begin{center}
    \begin{tabular}{|l|l|l|l|l|l|}
        \hline
        Type & Eigenvalues &  Order & No. conjugacy classes & Size \\ \hline
                                id    &  $1$     										        & $1$                 & $1$                   	     & $1$                      \\ \hline
                              -id    &  $-1$      										        & $d$                 & $1$   	              & $1$                      \\ \hline
                    unipotent    &  $1$           										        & $p$                 & $d$        	              & $\frac{q^2-1}{d}$ \\ \hline
                    unipotent    &  $-1$    										        & $dp$               & $d$          		     & $\frac{q^2-1}{d}$  \\ \hline
semisimple (non-split)    &  $r$, $r^q$ where $r\in\mathbb{F}_{q^2}$ and $r^{1+q}=1$   	        & divides $q-1$ & $\frac{q-d+1}{2}$  & $q(q+1)$               \\ \hline
         semisimple (split)   &  $r$, $1/r$ where $r\in\mathbb{F}_{q}$\textbackslash$\{0,\pm1\}$  & divides $q+1$ & $\frac{q-d-1}{2}$  & $q(q-1)$  	           \\ \hline
    \end{tabular}
\end{center}
$\vspace{5mm}$

The number of distinct conjugacy classes consisting of elements of order $m$ where $m$ divides $q\pm 1$ is $\phi(m)/2$ where $\phi$ is Euler's phi function. To see this, note that there are $\phi(m)$ elements of order $m$ in a cyclic group of order $q\pm 1$. We divide by two because a semisimple matrix of a given order is determined, up to conjugacy, by its pair of eigenvalues and in particular such matrices are conjugate to their inverse, indeed they have equal trace. The number of equivalence classes (under the action of the automorphism group) consisting of elements of order $m$ is $\phi(m)/2k$ where $k$ is the smallest integer such that $p^k\equiv\pm1$ mod $q$, this is the order of the field automorphism $x\mapsto x^p$ modulo inversion. 

\begin{remark}
There are a few things to note. An element of SL$_2(q)$ has order $3$ if and only if tr$(x)=-1$.  Note also that in SL$_2(q)$ where $q$ is as in Theorem \ref{main} there exists $\phi(17)/2=8$ conjugay classes of elements of order $17$ and under the action of the automorphism group there are two equivalence classes of elements of order $17$ each consisting of four conjugacy classes. 
\end{remark}
The following theorem is from \cite{BGK} which is in turn based on a classical theorem of Fricke and Klein. 

\begin{theorem}\label{tracepoly}
Let $F_{2}=<x,y>$ denote the free group of rank two, $G=$SL$_2(q)$ and let tr($M$) be the trace of a matrix $M$. Then for every element $w\in F_2$ there is a unique polynomial $P_w(s,t,u)\in\mathbb{Z}[s,t,u]$ such that tr$(w(A,B))=P_w($tr$(A),$tr$(B),$tr$(AB))$.
\end{theorem}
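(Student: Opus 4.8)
The plan is to prove the existence and uniqueness of the trace polynomial $P_w$ by induction on the word length of $w$, using the fundamental Cayley--Hamilton relation satisfied by $2\times 2$ matrices of determinant one. The base cases are the generators themselves: we have $\mathrm{tr}(x)=s$, $\mathrm{tr}(y)=t$, and $\mathrm{tr}(xy)=u$, so $P_x=s$, $P_y=t$, $P_{xy}=u$. The inductive engine is the trace identity for $\mathrm{SL}_2$: for any $M,N\in\mathrm{SL}_2(q)$ one has Cayley--Hamilton $M^2=\mathrm{tr}(M)M-I$, which upon multiplying by $NM^{-1}$ and taking traces yields
\begin{equation*}
\mathrm{tr}(MN)=\mathrm{tr}(M)\,\mathrm{tr}(N)-\mathrm{tr}(M^{-1}N).
\end{equation*}
Equivalently $\mathrm{tr}(MN)+\mathrm{tr}(M^{-1}N)=\mathrm{tr}(M)\,\mathrm{tr}(N)$, and together with $\mathrm{tr}(M)=\mathrm{tr}(M^{-1})$ this lets one rewrite the trace of any product in terms of traces of shorter products.

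For the existence half, I would set up the induction so that the statement being proved is: for \emph{every} triple of words, in particular for $w$, $xw$ and $yw$ (or some equivalent bookkeeping on syllable length), the traces are simultaneously expressible as integer polynomials in $s,t,u$. The point of carrying several words along together is that the reduction step for $\mathrm{tr}(w)$ typically produces traces of \emph{two} different shorter words, so one wants a statement strong enough to be closed under the recursion. Concretely, writing $w$ in terms of $x^{\pm1},y^{\pm1}$ and applying the identity above repeatedly, every trace $\mathrm{tr}(w(A,B))$ reduces to a $\mathbb{Z}$-linear combination of products of the three basic traces $s,t,u$, and the coefficients are integers because the only arithmetic used is the integer identity displayed above. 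Since this reduction uses only polynomial identities valid in $\mathrm{SL}_2$ over any commutative ring, the resulting $P_w$ is genuinely a polynomial over $\mathbb{Z}$.

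The uniqueness half follows once we exhibit a single pair $(A,B)$ for which $s,t,u$ are algebraically independent, or more simply once we know that the map $(A,B)\mapsto(\mathrm{tr}(A),\mathrm{tr}(B),\mathrm{tr}(AB))$ has Zariski-dense (hence Zariski-generic) image in $\mathbb{A}^3$. Then two polynomials $P,Q\in\mathbb{Z}[s,t,u]$ agreeing on all $(A,B)$ would agree on a dense subset of $\mathbb{A}^3$ and hence be equal. To see the image is dense, one checks directly that for generic diagonal-plus-unipotent choices the three traces can be made to take independent values; this is a short explicit computation and not a real obstacle.

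The main obstacle is organising the induction so that the recursion genuinely closes. A naive induction on total word length stalls because reducing $\mathrm{tr}(MN)$ replaces one product by traces of $M^{-1}N$ and $M,N$ separately, and without care one can produce words that are not strictly shorter in the obvious measure (especially after free reduction of inverses). The clean fix, which is the heart of the classical Fricke--Klein argument, is to prove the stronger statement for a suitable family of words simultaneously and to use a well-founded complexity measure (for instance, the number of syllables together with a lexicographic tie-breaker) under which each application of the identity strictly decreases the measure; verifying that this measure does decrease in every case of the reduction is the genuinely careful step. Since the paper cites \cite{BGK} and the classical Fricke--Klein result for this theorem, I would lean on that established recursion rather than re-deriving the combinatorics from scratch, and simply indicate the identity above as the mechanism that powers it.
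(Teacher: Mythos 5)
The paper itself offers no proof of this theorem: it is quoted from \cite{BGK} and attributed to a classical result of Fricke and Klein, so your proposal has to be measured against the classical argument rather than against anything in the text. Your existence half follows that classical route correctly in outline: the Cayley--Hamilton derivation of $\mathrm{tr}(MN)+\mathrm{tr}(M^{-1}N)=\mathrm{tr}(M)\,\mathrm{tr}(N)$ is right, the base cases are right, and you correctly identify where the real work lies, namely in choosing a well-founded complexity measure under which each application of the identity strictly decreases, since naive induction on word length stalls after free reduction. Deferring that bookkeeping to the cited literature is defensible given that the paper defers the entire theorem, but be aware that you have exhibited the mechanism, not closed the induction.

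The genuine gap is in your uniqueness argument. You propose to deduce uniqueness from Zariski-density of the image of $(A,B)\mapsto(\mathrm{tr}(A),\mathrm{tr}(B),\mathrm{tr}(AB))$, or from a pair $(A,B)$ with algebraically independent traces, but the theorem as stated fixes $G=\mathrm{SL}_2(q)$ with $q$ finite: the image of the finite set $G\times G$ is a finite subset of $\mathbb{A}^3$, hence Zariski-closed and never dense, and no elements of a finite field can be algebraically independent over the prime field. Worse, uniqueness of $P_w$ as a function on $\mathrm{SL}_2(q)^2$ is simply false: $P_w(s,t,u)$ and $P_w(s,t,u)+s^q-s$ induce the same function on $\mathbb{F}_q^3$ while being distinct elements of $\mathbb{Z}[s,t,u]$. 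The uniqueness claim only makes sense, and your density argument only works, if the identity is required to hold universally --- for generic matrices, equivalently over $\mathrm{SL}_2(\mathbb{C})$ (where the trace map is in fact surjective onto $\mathbb{C}^3$), or simultaneously for all $q$. So your argument is salvageable, but only after restating what ``unique'' means; applied literally to the finite group in the statement, that step fails.
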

We will call the polynomial $P_w$ in the theorem above the trace polynomial of $w$ and will sometimes write tr($w$). This theorem provides us with a powerful tool for studying the images of word maps, we can instead study the image of the corresponding trace polynomial. The following identities for traces of $2\times 2$ matrices $A$ and $B$ of determinant $1$ will be used throughout without mention and can be used to find the trace polynomial for specific words:
\begin{align*}
\text{Tr}(A)&=\text{T}r(A^{-1});\\
\text{Tr}(AB)&=\text{Tr}(BA);\\
\text{Tr}(AB)+\text{Tr}(AB^{-1})&=\text{Tr}(A)\text{Tr}(B).
\end{align*}
Using the identities above it is easy to see that the tr($[x,y])=s^2+t^2+u^2-ust-2$, where tr$(x)=s$, tr$(y)=t$ and tr$(xy)=u$. The following lemma makes use of these identities and will be needed in the next section.
\begin{lemma}\label{trace}
Let $v$ denote a group word and let $w=[[v,x],x]$, another group word. Suppose that tr($x)=s$ and let tr($[v,x])=t$. Then tr$(w)=t^2+ts^2$. In particular, if tr$(x)=1$ we have that tr$(w)=t^2+t$.
\end{lemma}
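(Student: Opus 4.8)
The plan is to reduce everything to a single application of the commutator trace identity derived just above, namely $\mathrm{tr}([a,b]) = \mathrm{tr}(a)^2 + \mathrm{tr}(b)^2 + \mathrm{tr}(ab)^2 - \mathrm{tr}(a)\mathrm{tr}(b)\mathrm{tr}(ab) - 2$. First I would abbreviate $u = [v,x]$, so that $w = [u,x]$ is an ordinary commutator, and set $\mathrm{tr}(u) = t$, $\mathrm{tr}(x) = s$. Applying the identity to the pair $(u,x)$ gives
\[
\mathrm{tr}(w) = t^2 + s^2 + \mathrm{tr}(ux)^2 - ts\,\mathrm{tr}(ux) - 2,
\]
so the entire computation hinges on evaluating the one remaining unknown trace $\mathrm{tr}(ux)$.

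The key observation — and the only genuinely word-specific step — is that this last trace telescopes. Writing the inner commutator as $u = [v,x] = vxv^{-1}x^{-1}$, the product $ux = vxv^{-1}x^{-1}\cdot x = vxv^{-1}$ is simply a conjugate of $x$, whence $\mathrm{tr}(ux) = \mathrm{tr}(x) = s$. (Equivalently, one can combine the third listed trace identity, $\mathrm{tr}(ux) + \mathrm{tr}(ux^{-1}) = \mathrm{tr}(u)\mathrm{tr}(x) = ts$, with the fact that $ux^{-1}$ is a conjugate of $x^{-1}$.) It is precisely because $u$ is itself a commutator in $x$ that the cross term collapses and all dependence on the internal structure of $v$ disappears, leaving $\mathrm{tr}(w)$ a function of $t$ and $s$ alone.

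Substituting $\mathrm{tr}(ux) = s$ then yields
\[
\mathrm{tr}(w) = t^2 + s^2 + s^2 - ts^2 - 2 = t^2 + 2s^2 - ts^2 - 2,
\]
which is the honest trace polynomial over $\mathbb{Z}$. Since we are working in $\mathrm{SL}_2(q)$ with $q$ a power of $2$ (the setting of Theorem \ref{main}), we have $2s^2 = 0$, $-2 = 0$ and $-ts^2 = ts^2$, so this collapses to $\mathrm{tr}(w) = t^2 + ts^2$; setting $s = 1$ recovers the stated special case $t^2 + t$.

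I expect the only real obstacle to be the identification of $\mathrm{tr}(ux)$: once the telescoping $[v,x]\,x = vxv^{-1}$ is spotted, everything else is routine trace bookkeeping via the three listed identities and Cayley--Hamilton. It is worth flagging explicitly that the stated polynomial $t^2 + ts^2$ is an identity only in characteristic $2$; over $\mathbb{Z}$ the trace polynomial carries the additional terms $2s^2 - 2ts^2 - 2$, all of which vanish modulo $2$ and hence in every field relevant to Theorem \ref{main}.
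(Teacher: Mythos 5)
Your proof is correct and follows essentially the same route as the paper: identify the cross trace $\mathrm{tr}([v,x]\,x)$ via the telescoping observation that $[v,x]$ times a suitable power of $x$ is a conjugate of $x^{\pm 1}$, then substitute into the Fricke commutator trace identity $\mathrm{tr}([a,b])=\mathrm{tr}(a)^2+\mathrm{tr}(b)^2+\mathrm{tr}(ab)^2-\mathrm{tr}(a)\mathrm{tr}(b)\mathrm{tr}(ab)-2$. The only differences are cosmetic: with your commutator convention $[a,b]=aba^{-1}b^{-1}$ you get $\mathrm{tr}([v,x]x)=s$ directly, where the paper (using the other convention) gets $s(t+1)$, and you work over $\mathbb{Z}$ and reduce modulo $2$ at the end, making explicit the characteristic-$2$ assumption that the paper uses implicitly throughout.
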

\begin{proof}
First note that tr$([v,x]x)=$tr$([v,x]x^{-1})+$tr$([v,x])$tr$(x)=s(t+1)$. It then follows that tr$(w)=t^2+s^2+s^2(t+1)^2+ts^2(t+1)=t^2+ts^2$.
\end{proof}

The following is a classical theorem and can be found in \cite{Lang}.

\begin{theorem}(Hilbert's Additive Theorem $90$)\label{Hilbert90}
Let $k$ be a field and $K/k$ a cyclic extension of degree $n$ with Galois group $G$. Let $\sigma$ be a generator for $G$. Let $\beta\in K$. The trace $Tr^{K}_{k}(\beta)=0$ if and only if there exists an element $\alpha\in K$ such that $\beta=\alpha - \alpha^{\sigma}$.
\end{theorem}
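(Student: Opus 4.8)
The plan is to prove the two directions separately: the forward implication is a short formal computation, while the converse is the substantive part and rests on the non-vanishing of the trace map. Throughout write $\mathrm{Tr}=\mathrm{Tr}^{K}_{k}=\sum_{i=0}^{n-1}\sigma^{i}$, recall $\sigma^{n}=\mathrm{id}$, and note $\alpha^{\sigma}=\sigma(\alpha)$. For the ``if'' direction, suppose $\beta=\alpha-\sigma(\alpha)$. Since
\[
\mathrm{Tr}(\sigma(\alpha))=\sum_{i=0}^{n-1}\sigma^{i+1}(\alpha)=\sum_{i=1}^{n}\sigma^{i}(\alpha)=\sum_{i=0}^{n-1}\sigma^{i}(\alpha)=\mathrm{Tr}(\alpha),
\]
where the penultimate equality uses $\sigma^{n}(\alpha)=\alpha=\sigma^{0}(\alpha)$, linearity of the trace gives $\mathrm{Tr}(\beta)=\mathrm{Tr}(\alpha)-\mathrm{Tr}(\sigma(\alpha))=0$.

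For the ``only if'' direction, assume $\mathrm{Tr}(\beta)=0$ and build $\alpha$ explicitly. First I would invoke the linear independence of the distinct automorphisms $1,\sigma,\dots,\sigma^{n-1}$ viewed as characters $K^{\times}\to K^{\times}$ (Dedekind--Artin independence of characters), which forces the $k$-linear map $\mathrm{Tr}$ to be not identically zero; hence I may fix $\theta\in K$ with $c:=\mathrm{Tr}(\theta)\ne 0$. Writing $S_{i}=\sum_{j=0}^{i}\sigma^{j}(\beta)$ for the partial sums, I then set
\[
\alpha=\frac{1}{c}\sum_{i=0}^{n-2}S_{i}\,\sigma^{i+1}(\theta),
\]
and verify $\alpha-\sigma(\alpha)=\beta$ by direct expansion. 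Applying $\sigma$ sends $S_{i}$ to $S_{i+1}-\beta$, so in the difference $\alpha-\sigma(\alpha)$ the partial sums $S_{m}$ in the overlapping index range cancel pairwise and leave the coefficient $\beta$ against each power $\sigma^{m}(\theta)$; the lone boundary term carries the full-period sum $S_{n-1}=\mathrm{Tr}(\beta)$, which vanishes by hypothesis, and collecting what survives yields $\frac{\beta}{c}\sum_{i=0}^{n-1}\sigma^{i}(\theta)=\frac{\beta}{c}\cdot c=\beta$.

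The main obstacle is the converse, and its only non-formal ingredient is the existence of an element of nonzero trace: the explicit $\alpha$ is meaningless without a $\theta$ to assemble it from, and it is precisely the independence of characters that supplies one. Everything else is bookkeeping in the telescoping sum, where care is needed to align the index shifts introduced by $\sigma$ so that exactly two quantities emerge---the factor $\mathrm{Tr}(\theta)=c$, which cancels the normalisation, and the full-period sum $\mathrm{Tr}(\beta)=0$, which is the single point at which the hypothesis is consumed.
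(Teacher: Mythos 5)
Your proposal is correct and complete: the forward direction is the standard telescoping observation that $\mathrm{Tr}(\sigma(\alpha))=\mathrm{Tr}(\alpha)$, and the converse is the classical construction $\alpha=\frac{1}{c}\sum_{i=0}^{n-2}S_{i}\,\sigma^{i+1}(\theta)$ from an element $\theta$ of nonzero trace (whose existence you correctly ground in Dedekind--Artin independence of characters); the index bookkeeping you describe does close up, with the boundary term carrying $S_{n-1}=\mathrm{Tr}(\beta)=0$ and the surviving terms assembling to $\frac{\beta}{c}\,\mathrm{Tr}(\theta)=\beta$. Note that the paper offers no proof of this statement at all --- it is quoted as a classical theorem with a citation to Lang --- and your argument is precisely the standard proof found in that reference, so there is no divergence to report.
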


Here, Tr$^{K}_{k}(\beta)=\sum_{\sigma}\alpha^{\sigma}$, where the sum is over all the Galois conjugates of $\alpha$.

\subsection{Proof of Theorem \ref{main}}
Let $q$ be as in the statement of the theorem and let $M$ denote a fixed element of order $17$ in SL$_2(q)$ with trace $t\in \mathbb{F}_{2^4}$ such that $t$ is a primitive element, i.e. $t$ generates $\mathbb{F}_{2^4}^{*}$. It is easy to check that such a matrix $M$ exists. Then the matrices $M$, $M^2$, $M^4$ and $M^8$ are representatives for four distint conjugacy classes of elements of order $17$ with traces $t$, $t^2$, $t^4$ and $t^8$ respectively. Under the action of the automorphism group these four conjugacy classes merge to form a single equivalence class which we shall dente by $C_1$. The second equivalence class of elements of order $17$, which we shall denote by $C_2$, contains $M^7$ which has trace $t^3$. We aim to construct a word whose corresponding trace polynomial has $t$ in its image but not $t^3$.  This means that the corresponding word map has image consisting of $C_1$ but not $C_2$.

Consider the word $w(x,y)=[[x^{e_3},y^{e_2}],x^{e_3}]$ where $e$ is the exponent of SL$_2(q)$ and $e_n=e/n$. Write $x_3$ for $x^{e_3}$ and $y_2$ for $y^{e_2}$. Then the trace of $x_3$, if it is not the identity, is $1$ whilst $y_2$ has trace $0$. Now tr($[[x_3,y_2],x_3])=u+u^2$ where $u=$tr$(x_3y_2)^2$. It is not hard to see that $u$ can take any value in $\mathbb{F}_q$ as $x$ and $y$ range over the elements of SL$_2(q)$ and since the square map is surjective in a field of characteristic $2$. Let $f(u)=u^2+u$.

Let $w_m(x,y)$ denote the word $[w,_mx_3]$. The next lemma follows easily from \ref{trace}.

\begin{lemma}
Let $w_m$ be as above. Then tr$(w_m)=f^{m+1}(u)$.
\end{lemma}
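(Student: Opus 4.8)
The plan is to establish the identity $\mathrm{tr}(w_m)=f^{m+1}(u)$ by induction on $m$, at each step invoking Lemma \ref{trace} to strip off the outermost commutator with $x_3$. The essential observation is that, by definition of the iterated commutator, every $w_{m+1}=[w_m,x_3]$ can be written in the shape $[[v,x_3],x_3]$ with the inner commutator $[v,x_3]$ equal to $w_m$: for $m\ge 1$ one takes $v=w_{m-1}$, so that $[v,x_3]=[w_{m-1},x_3]=w_m$, and for $m=0$ one takes $v=[x_3,y_2]$, so that $[v,x_3]=[[x_3,y_2],x_3]=w=w_0$. This is precisely the form required to apply Lemma \ref{trace}, and it works uniformly for all $m\ge 0$.

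Next I would record the ingredient that makes the lemma collapse to iteration of a single polynomial. Since $x_3=x^{e_3}$ satisfies $x_3^{3}=x^{e}=I$, it has order dividing $3$; when $x_3\ne I$ it therefore has order exactly $3$ and hence trace $-1=1$ in characteristic $2$ (by the remark following the conjugacy-class table), while the degenerate case $x_3=I$ renders $w$ and all the $w_m$ trivial. Thus I may put $s=\mathrm{tr}(x_3)=1$ in Lemma \ref{trace}, whose conclusion then reads $\mathrm{tr}([[v,x_3],x_3])=\mathrm{tr}([v,x_3])^2+\mathrm{tr}([v,x_3])$. Feeding in $[v,x_3]=w_m$ gives the clean recurrence $\mathrm{tr}(w_{m+1})=\mathrm{tr}(w_m)^2+\mathrm{tr}(w_m)=f(\mathrm{tr}(w_m))$, valid for every $m\ge 0$.

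Finally I would run the induction. The base case is $\mathrm{tr}(w_0)=\mathrm{tr}(w)=u^2+u=f(u)=f^{0+1}(u)$, which was already computed in the paragraph preceding the statement. Assuming $\mathrm{tr}(w_m)=f^{m+1}(u)$, the recurrence yields $\mathrm{tr}(w_{m+1})=f\bigl(f^{m+1}(u)\bigr)=f^{m+2}(u)=f^{(m+1)+1}(u)$, closing the induction and proving the claim. I do not anticipate a real obstacle: the entire analytic content sits in Lemma \ref{trace}, and the only points demanding care are the bookkeeping of the commutator nesting (ensuring that at each stage the inner commutator is genuinely $w_m$, so the lemma is applicable) and the justification that $\mathrm{tr}(x_3)=1$, that is, the harmless reduction to the case $x_3\ne I$.
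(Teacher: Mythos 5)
Your proof is correct and takes essentially the same route the paper intends: the paper gives no written proof beyond saying the lemma ``follows easily from Lemma \ref{trace}'', and your induction --- base case $\mathrm{tr}(w_0)=f(u)$ from the computation preceding the statement, inductive step applying Lemma \ref{trace} with $s=\mathrm{tr}(x_3)=1$ to $w_{m+1}=[[v,x_3],x_3]$ where $[v,x_3]=w_m$ --- is exactly that argument written out. Your extra care (the commutator-nesting bookkeeping and the harmless degenerate case $x_3=I$, where both sides vanish in characteristic $2$) adds rigour but is not a departure from the paper's approach.
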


The next lemma is an easy induction.
\begin{lemma}\label{power}
For each $i$, $f^{2^i}(u)=u^{2^{2^i}}+u$.
\end{lemma}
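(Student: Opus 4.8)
The plan is to proceed by induction on $i$, as the lemma suggests, exploiting that we are in characteristic $2$ so that the Frobenius map $u\mapsto u^2$ is additive and satisfies the \emph{freshman's dream} identity $(a+b)^{2^k}=a^{2^k}+b^{2^k}$ for every $k$. For the base case $i=0$ we simply have $f^{2^0}(u)=f(u)=u^2+u=u^{2^{2^0}}+u$, so there is nothing to check beyond unwinding the definition of $f$.

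For the inductive step, I would write $f^{2^{i+1}}=f^{2^i}\circ f^{2^i}$ and set $g(u):=f^{2^i}(u)$, which by the inductive hypothesis equals $u^{2^{2^i}}+u$. Then $f^{2^{i+1}}(u)=g(g(u))=g(u)^{2^{2^i}}+g(u)$, and applying the freshman's dream to the first summand gives $g(u)^{2^{2^i}}=u^{2^{2^i}\cdot 2^{2^i}}+u^{2^{2^i}}=u^{2^{2^{i+1}}}+u^{2^{2^i}}$, using $2^{2^i}\cdot 2^{2^i}=2^{2^{i+1}}$. Adding $g(u)=u^{2^{2^i}}+u$ back in, the two copies of $u^{2^{2^i}}$ cancel in characteristic $2$, leaving exactly $u^{2^{2^{i+1}}}+u$, as required.

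The only subtle point, and the step I expect to carry the whole argument, is this cancellation of the middle terms, which happens precisely because the characteristic is $2$; in any other characteristic the iterate would not collapse to a binomial. An equivalent and perhaps cleaner route is to regard $f=\phi+\mathrm{id}$ as an additive operator, where $\phi$ denotes Frobenius, and to expand $f^{2^i}=(\phi+\mathrm{id})^{2^i}$ by the binomial theorem: since $\binom{2^i}{j}\equiv 0\pmod 2$ for $0<j<2^i$, all intermediate terms vanish and only $\phi^{2^i}+\mathrm{id}$ survives, which is exactly the map $u\mapsto u^{2^{2^i}}+u$.
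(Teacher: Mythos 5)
Your proof is correct and is exactly the ``easy induction'' the paper invokes without writing out: the paper offers no further detail, and your inductive step via $f^{2^{i+1}}=f^{2^i}\circ f^{2^i}$, the freshman's dream, and the characteristic-$2$ cancellation of the middle terms $u^{2^{2^i}}$ is precisely the intended argument. The closing remark viewing $f=\phi+\mathrm{id}$ and killing the middle binomial coefficients mod $2$ is a nice equivalent formulation, but it is a bonus rather than a different proof.
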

Theorem \ref{main} will follow immediately from the next lemma.

\begin{lemma}
For $q=2^{2^{2+k}}$, $f^{m+1}$ has $t$ in its image but not $t^3$ where $m=2^{2+k}-2^2$ . Hence, the image of $w_m$ contains $C_1$ but not $C_2$.
\end{lemma}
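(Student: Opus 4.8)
The plan is to exploit that in characteristic $2$ the map $f(u)=u^2+u$ is $\mathbb{F}_2$-linear. Writing $\phi$ for the Frobenius $u\mapsto u^2$, we have $f=\phi+\mathrm{id}$ as operators on $\mathbb{F}_q$, so $f^j=(\phi+1)^j=\sum_i\binom{j}{i}\phi^i$ with the binomial coefficients read modulo $2$, i.e. $f^j(u)=\sum_i\binom{j}{i}u^{2^i}$. Setting $N=2^n$ with $n=2+k$, the first fact I would record is that $f$ is nilpotent of index exactly $N$ on $\mathbb{F}_q=\mathbb{F}_{2^{N}}$: since $u^q=u$ we get $f^{N}(u)=u^{2^{N}}+u=u^q+u=0$, whereas $f^{N-1}(u)=\sum_{i=0}^{N-1}u^{2^i}=\mathrm{Tr}_{\mathbb{F}_q/\mathbb{F}_2}(u)$ (all the $\binom{N-1}{i}$ are odd by Lucas) is the nonzero field trace. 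As $\dim_{\mathbb{F}_2}\mathbb{F}_q=N$ equals the nilpotency index, $f$ must be a single nilpotent Jordan block; equivalently $\mathbb{F}_q$ is a cyclic $\mathbb{F}_2[f]$-module isomorphic to $\mathbb{F}_2[Y]/(Y^{N})$ with $f$ acting as multiplication by $Y$.

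It follows that $\mathrm{rank}(f^j)=N-j$ and $\mathrm{image}(f^j)=\ker(f^{N-j})$ for every $j\le N$. Since $m+1=2^n-3=N-3$, this gives $\mathrm{image}(f^{m+1})=\ker(f^{3})$ (one can also see the inclusion $\mathrm{image}(f^{m+1})\subseteq\ker(f^3)$ directly from $f^3\circ f^{m+1}=f^{N}=0$, and match dimensions, both being $3$, via the degrees of the separable additive polynomials $f^{m+1}$ and $f^3$). Because $u=\mathrm{tr}(x_3y_2)^2$ runs over all of $\mathbb{F}_q$ and $\mathrm{tr}(w_m)=f^{m+1}(u)$, the set of traces attained by $w_m$ is precisely $\mathrm{image}(f^{m+1})=\ker(f^{3})$. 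Hence $v\in\mathbb{F}_q$ is a trace of $w_m$ if and only if $f^3(v)=v^8+v^4+v^2+v=0$; restricting to $v\in\mathbb{F}_{16}$ (which contains both $t$ and $t^3$) this reads $\mathrm{Tr}_{\mathbb{F}_{16}/\mathbb{F}_2}(v)=0$, which by Hilbert's additive Theorem~\ref{Hilbert90} is the Artin--Schreier condition $v=\alpha^2+\alpha$ for some $\alpha\in\mathbb{F}_{16}$.

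It then remains to evaluate two field traces. For $t^3$: since $t$ has order $15$, $t^3$ is a primitive fifth root of unity, so its minimal polynomial over $\mathbb{F}_2$ is $x^4+x^3+x^2+x+1$ and $\mathrm{Tr}_{\mathbb{F}_{16}/\mathbb{F}_2}(t^3)=1\ne 0$; thus $t^3$ is not a trace of $w_m$, and as the field trace is constant on Frobenius orbits the same holds for the whole family $t^3,t^6,t^9,t^{12}$ of $C_2$-traces, so the image of $w_m$ misses $C_2$. For $t$ itself I would compute the minimal polynomial of $t=\mathrm{tr}(M)$ directly from $s_j:=\mathrm{tr}(M^j)=\lambda^j+\lambda^{-j}$ using $s_is_j=s_{i+j}+s_{i-j}$: the constant term is $t^{15}=1$, and the linear coefficient simplifies to $\sum_{j=1}^{8}s_j=\sum_{j=1}^{16}\lambda^j=1$. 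As $t$ is primitive its minimal polynomial is one of the two primitive quartics $x^4+x+1$ or $x^4+x^3+1$, and having linear coefficient $1$ forces $x^4+x+1$; hence $\mathrm{Tr}_{\mathbb{F}_{16}/\mathbb{F}_2}(t)=0$. So $t$ is a trace of $w_m$, the corresponding value lies in $C_1$, and by closure of the word image under automorphisms all of $C_1$ is attained.

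The main structural step is the middle one, $\mathrm{image}(f^{m+1})=\ker(f^3)$; once $f$ is recognized as a single Jordan block of size $2^n$ and one reads off $m+1=2^n-3$, it is immediate, and this is exactly why the exponent $m=2^n-2^2$ is chosen. The only genuinely arithmetic point is pinning down $\mathrm{Tr}_{\mathbb{F}_{16}/\mathbb{F}_2}(t)=0$, i.e. that the primitive trace $t$ is a root of $x^4+x+1$ rather than $x^4+x^3+1$; the computation above shows this is forced by primitivity, the alternative value of the linear coefficient producing the order-$5$ polynomial $x^4+x^3+x^2+x+1$ and contradicting $\mathrm{ord}(t)=15$.
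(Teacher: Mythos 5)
Your proof is correct, and its structural core is genuinely different from the paper's. The paper splits $f^{m+1}=f\circ f^{m}$: by its Lemma \ref{power}, $f^{m}$ (with $m=2^{2}+2^{3}+\cdots+2^{k+1}$) is a composition of maps $u\mapsto u^{2^{2^{i}}}+u$, each of which is the relative trace onto the next subfield down, so the image of $f^{m}$ on $\mathbb{F}_q$ is exactly the subfield $\mathbb{F}_{16}$; the general case then reduces to the base case $k=0$, where Theorem \ref{Hilbert90} identifies $f(\mathbb{F}_{16})$ with the trace-zero hyperplane. You avoid both the decomposition and the induction: recognizing $f=\phi+1$ as nilpotent of index exactly $N=2^{2+k}$ on $\mathbb{F}_q$, hence a single Jordan block, gives $\mathrm{image}(f^{m+1})=\ker(f^{3})$ at once, i.e.\ the explicit test $v^{8}+v^{4}+v^{2}+v=0$, which on $\mathbb{F}_{16}$ is the field trace. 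Both routes land on the same characterization of the attained traces; yours trades the paper's ``collapse to a subfield, then reduce to $q=16$'' picture for a uniform rank--nullity argument. A further point in your favour: the paper's base case asserts that primitivity of $t$ alone gives $\mathrm{Tr}^{\mathbb{F}_{16}}_{\mathbb{F}_2}(t)=0$, which as stated is not right (the roots of $x^{4}+x^{3}+1$ are primitive with field trace $1$); what is actually needed is that $t$ is in addition the trace of an order-$17$ matrix, and that hypothesis enters your argument exactly where it should, in the identity (linear coefficient of the minimal polynomial of $t$) $=\sum_{j=1}^{8}s_{j}=\sum_{j=1}^{16}\lambda^{j}=1$, which forces the minimal polynomial to be $x^{4}+x+1$. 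So your writeup both differs in method and supplies the one arithmetic detail the paper leaves implicit.
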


\begin{proof}
First let $k=0$, so $q=2^4$ and $m=0$. Then $w_m$ has trace polynomial $f(u)=u^2+u$. Suppose $t$ generates $\mathbb{F}_q^*$, then an easy calculation in $\mathbb{F}_q$ shows that Tr$^{\mathbb{F}_q}_{\mathbb{F}_2}(t)=0$ whilst Tr$^{\mathbb{F}_q}_{\mathbb{F}_2}(t^3)=1$. The first case then follows from Hilbert's Theorem $90$.

For general $k$ let $q$ and $m$ be as stated in the theorem. Then $f^{m+1}=f\circ f^{m}$. From lemma \ref{power} it is immediate that the image of $f^{m}$ over $\mathbb{F}_q$ is precisely $\mathbb{F}_{2^4}$ since $m=\sum_{i=0}^{k-1}2^{2+i}$. The general case then follows from the case above.
\end{proof}
The word $w_m^{e/17}$ where $e$ is the exponent of SL$(2,q)$ completes the proof of Corollary \ref{cor}. since the image of the trace polynomial has size $8$ it is not hard to see that the image of the word map $w_m$ consists of $8$ different conjugacy classes  one of which is the identity and four of which makes up $C_1$. A more detailed inspection reveals that the three remaining conjugacy classes must make up the single equivalence class consisting of elements of order $5$. The word $w_m^5$ suffices to complete the proof of Corollary \ref{cor}.

\section{Proof of Theorem \ref{mainalt}}
The proof of our main result is obtained using MAGMA and we will present the results below but first we will explain the general approach. Let $C$ denote an equivalence class in Alt($n$) and denote by $w_C$ the word such that Alt$($n$)_{w_C}=\{e\}\cup C$. We will use the words constructed in \cite{KN} to build the new words that have the image we want, the words we are going to  construct follow a general form. Let $C_1$ and $C_2$ be two equivalent classes in Alt$(n)$ and suppose that we have words $w_{C_1}$ and $w_{C_2}$ with images $\{e\}\cup C_1$ and $\{e\}\cup C_2$ respectively. Consider now the group word $w_{C_1,C_2,e_k}=[w_{C_1},_kw_{C_2}]^{e_k}:=[...[[w_{C_1},w_{C_2}]^{e(1)},w_{C_2}]^{e(2)},...]^{e(k)}$ for some $k$, $e(i)\in\mathbb{N}$ where we write $e_k$ for the vector $(e(i))$. Suppose we want to find a word with image $\{e\}\cup C$ for some equivalence class $C$ in Alt($n$) where $C$ has support size $m$. The idea to pick $C_1$ and $C_2$ and to fix $k$ `large' enough so that the image of $w_{C_1,C_2,((1))}$ contains $C$. We then choose `appropriate' $e(i)$ to kill any unwanted equivalence classes, whilst still retaining $C$, in the image. Also note that if $C_1$ has support size $m_1$ and $C_2$ has support size $m_2$ then we need only check our word maps in Alt($n$) for $m \le n\le m_1+m_2-1$. We will now present the required words to obtain each equivalence class to complete the proof of Theorem \ref{mainalt}:
\begin{enumerate}
\item $(2,2)$-cycles: Take $C_1=3$-cycles, $C_2=3$-cycles and $e_1=(3)$.

\item $(4,2)$-cycles: Take $C_1=5$-cycles, $C_2=5$-cycles and $e_2=(3.5.7,3.5.7)$.

\item $(3,3)$-cycles (for $n\ne 6$): Take $C_1=5$-cycles, $C_2=5$-cycles and $e_2=(3.5.7,4.5.7)$.

\item $(3,2,2)$-cycles: Take $C_1=7$-cycles, $C_2=5$-cycles and $e_3=(3.5.7,4.5,5)$.

\item $(2,2,2,2)$-cycles: Take $C_1=(3,3)$-cycles, $C_2=(3,3)$-cycles and $e_1=(2.3.5.7)$.

\item $(6,2)$-cycles: Take $C_1=(4,2)$-cycles, $C_2=7$-cycles and $e_5=(4.3,2.5.7,4.3.7,4.9.5,5.7)$.

\item $(5,3)$-cycles: Take $C_1=7$-cycles, $C_2=7$-cycles and $e_6=(2.3,7,4.3.7,4.9.5,4.9.5,4.7)$.

\item $(4,4)$-cycles: Take $C_1=7$-cycles, $C_2=7$-cycles and $e_5=(4.5.7,4.5.7,5.7,4.3.5.7,9.5.7)$.

\item $9$-cycles: Take $C_1=7$-cycles, $C_2=7$-cycles and $e_4=(2.9,4.3.5.7,2.9.7,4.5.7)$.

\item $(5,2,2)$-cycles: Take $C_1=7$-cycles, $C_2=7$-cycles and $e_6=(4.7,1,4.5.7,2.5,4.3.5.7,9.7)$.

\item $(4,3,2)$-cycles: Take $C_1=5$-cycles, $C_2=7$-cycles and $e_7=(8.5.7,3.5.7,4.25,81,4.25,4,5.7)$.

\item $(3,3,3)$-cycles: Take $C_1=7$-cycles, $C_2=7$-cycles and $e_1=(4.3.5.7)$.

\item $(4,2,2,2)$-cycles: Take $C_1=5$-cycles, $C_2=9$-cycles and $e_6=(9.5.7,16.7,5.7,8.9.5.11,8.5.7,9.5.7.11)$.

\item $(8,2)$-cycles: Take $C_1=5$-cycles, $C_2=9$-cycles and $e_7=(1,9,1024.9.5,4.9.5,9.5.7,4.9.5.7.11,9.5.7.11)$.

\item $(3,3,2,2)$-cycles: Take $C_1=5$-cycles, $C_2=(3,3,3)$-cycles and $e_7=(1,5,9,5.8.7,9.25.7,81,5.7.11)$.

\item $(7,3)$-cycles: Take $C_1=5$-cycles, $C_2=(3,3,3)$-cycles and $e_7=(2,2.5,1,1,8.7,7^4,8.5.11)$.

\item $(6,4)$-cycles: Take $C_1=5$-cycles, $C_2=9$-cycles and $e_7=(1,8.3.5,25.7,64.9.7.11,8.5.7,8.81.7.11,5.7.11)$.

\item $(5,5)$-cycles: Take $C_1=5$-cycles, $C_2=9$-cycles and $e_7=(4,3,9,5.7,1,4.9.5.7.11,8.9.7.11)$.

\end{enumerate}
\begin{remark}
As already stated, MAGMA was used to carry out the computations to check that the words above have the required image. However, some of the cases are easy to check without the use of MAGMA. For example, to get the $(2,2)$-cycles you need only check that there exists two $3$-cycles, $x$ and $y$, whose commutator, $[x,y]$, is a $(2,2)$-cycle. Then since the commutator of two $3$-cycles lies in Alt($5$) and the only elements of order two in Alt($5$) are the $(2,2)$-cycles the result follows easily. Similar arguments can be used to obtain the $(2,2,2,2)$-cycles and the $(3,3,3)$ cycles.
\end{remark}

\begin{remark}
Together with the results from \cite{KN}, the above shows that any equivalence class with support size at most $11$ can be obtained as the image of a word map in any alternating group.
\end{remark}

\begin{remark}
In \cite{KN}, the authors prove that for every $n,q\ge 2$ with the possible exception of SL$_4(2)$ there is a word $w$ in $F_2$ such that SL$_n(q)_w$ consists of the identity and the conjugacy class of all transpositions. Note that SL$_4(2)$ is isomorphic to Alt($8$) and the conjugacy class of transposistions in SL$_4(2)$ corresponds to the conjugacy class of $(2,2,2,2)$-cycles in Alt($8$). Hence, Theorem \ref{mainalt} deals with this exceptional case.
\end{remark}

\bibliographystyle{plain}
\bibliography{refs}

\end{document}